\newtheorem*{remark}{Remark}
\newtheorem{theorem}{Theorem}
\newtheorem{lemma}{Lemma}
\newtheorem{proposition}{Proposition}
\newtheorem{corollary}{Corollary}
\begin{document}

\title{A Formal Analogue of Euler’s Formula\\for Infinite Planar Regular Graphs}

\author{
	Piotr Jędrzejewicz$^{1}$ and Mikołaj Marciniak$^{2}$ \\
	{\small $^{1}$Nicolaus Copernicus University in Toruń}, 
	{\small $^{2}$University of Szczecin}
}

\maketitle

\begin{abstract}
We present a formal version of the numbers of vertices, edges, and faces for infinite planar regular triangular meshes of degree $r>6$. These numbers are defined via Euler summation of sequences obtained from iterated expansions of a convex combinatorial disk. We prove that these formal quantities satisfy the classical Euler formula, providing 
a combinatorial analogue of Euler's formula for infinite planar graphs. 
\end{abstract}

\newpage

\section*{Introduction}

\subsection*{Platonic solid}
A Platonic solid \cite{Tap21, Cor04} is a convex polyhedron in which each face 
is a regular polygon and each vertex has a fixed degree $r$. 
Each Platonic solid can be drawn as a graph on a plane \cite{Joh66}. 
In the special case where all faces are triangles, 
the number of vertices $v$, edges $e$, and faces $f$ is given by the formulas 
$$v=\frac{12}{6-r}, \; e=\frac{6r}{6-r}, \; f=\frac{4r}{6-r},$$
where $r \in \{3,4,5\}$ \cite{Ric08}.
These formulas can be obtained from elementary properties for graphs in the plane.

A natural extension of this type of graph occurs for $r>=6$. For $r=6$, we obtain an infinite grid which combinatorially corresponds to the hexagonal tiling. For $r>6$, it is necessary to draw a graph 
in the hyperbolic plane or to abandon the condition that all faces are equilateral triangles \cite{Joh66}. 

Furthermore, all such graphs drawn in the plane satisfy the relationship \cite{Ric08} between the number of its vertices, edges, and faces, known as Euler's formula.

\subsection*{Euler summation of infinite series}
The Euler summation is based on assigning to a divergent series 
a finite number calculated as the value in the analytical extension 
of the formal generating function of the series \cite{Att25}. 

Related regularization methods are even useful in physics. 
For example, the equality $$1+2+3+\ldots=-1/12$$ is used in calculating 
the energy of the quantum vacuum field between two plates \cite{Mon25}. 
The formula for the vacuum energy and the resulting Casimir force 
obtained in this way turns out to be correct and can be verified experimentally.

In this paper, we intend to use Euler summation to count the "number" 
of vertices, faces, and edges of infinite regular graphs.
 
\section{Preliminaries}

\subsection{Summation of recursive sequences}
Let $(x_n)_{n=1}^{\infty}$ be a sequence given by the recurrence
\begin{equation*}
	x_{n+2} = \gamma x_{n+1} - x_{n},
\end{equation*}
where $\gamma \neq 2$ is a real number. 
Let $F(t) = \sum_{n=1}^{\infty} x_n t^n$ denote the formal generating function of the series $(x_n)_{n=1}^{\infty}$. 
Multiplying the above recurrence by $t^{n+2}$ and summing over all $n \geq 1$ we obtain
$$\sum\limits_{n=1}^{\infty} x_{n+2} t^{n+2} = \gamma t \sum\limits_{n=1}^{\infty} x_{n+1} t^{n+1} - t^2 \sum\limits_{n=1}^{\infty} x_{n} t^{n}.$$
Next, we express both sides in terms of $F(t)$. Thus
$$F(t) -x_1 t - x_2 t^2 = \gamma t \left(F\left(t\right)- x_1 t\right) - t^2 F(t)$$
and finally after collecting
$$F(t) (1 - \gamma t + t^2) = (t-\gamma t^2) x_1 + t^2 x_2.$$
Dividing both sides by $1 - \gamma t + t^2$ gives
$$ F(t) = \frac{(t-\gamma t^2) x_1 + t^2 x_2}{1-\gamma t + t^2}$$
for all $t$ within the radius of convergence of the formal power series $F(t)$.

The rational function 
$$ F_x(t) = \frac{(t-\gamma t^2) x_1 + t^2 x_2}{1-\gamma t + t^2}$$
is the analytic continuation of the formal generating function $F(t)$ associated with the sequence $(x_j)_{j=1}^{\infty}$. The assumption $\gamma \neq 2$ ensures that the value $F_x (1)$ is well-defined. We define the Euler sum of the (potentially divergent) series $\sum_{n=1}^{\infty} x_n$ by
\begin{equation}
	\label{def:euler_sum}
	\tag{*}
	\sum_{n=1}^{\infty} x_n  := F_x(1) = \frac{(1-\gamma)x_1+x_2}{2-\gamma}. 
\end{equation}
In other words, Euler summation assigns to the (potentially divergent) series the value of its analytic continuation at $t=1$. This provides a well-defined finite value for a series defined by a recurrence relation, even if the original series is divergent.

\begin{lemma}
	\label{lem:sum}
	Suppose that the sequence $(x_n)_{n=1}^{\infty}$ fulfils the recurrence relation
$$x_{n+2} = (r-4)x_{n+1} - x_n,$$
where $r \neq 6$. Then the Euler sum of its elements is given by:
$$(r-6 )\sum_{n=1}^{\infty} x_n = (r-5)x_1 - x_2.$$
\end{lemma}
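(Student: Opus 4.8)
The plan is to obtain the statement as a direct specialization of the Euler summation formula~\eqref{def:euler_sum} derived above. First I would set $\gamma = r-4$, so that the recurrence $x_{n+2} = (r-4)x_{n+1} - x_n$ becomes exactly the recurrence $x_{n+2} = \gamma x_{n+1} - x_n$ treated in the preamble. The hypothesis $r \neq 6$ is precisely the condition $\gamma \neq 2$, so the rational function $F_x(t) = \dfrac{(t-\gamma t^2)x_1 + t^2 x_2}{1-\gamma t + t^2}$ is regular at $t=1$ and the Euler sum $\sum_{n=1}^\infty x_n := F_x(1)$ is well-defined.

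Next I would substitute $\gamma = r-4$ into the closed form
$$\sum_{n=1}^{\infty} x_n = \frac{(1-\gamma)x_1 + x_2}{2-\gamma},$$
obtaining
$$\sum_{n=1}^{\infty} x_n = \frac{(1-(r-4))x_1 + x_2}{2-(r-4)} = \frac{(5-r)x_1 + x_2}{6-r}.$$
Finally, multiplying through by $r-6 = -(6-r)$ gives
$$(r-6)\sum_{n=1}^{\infty} x_n = -\big((5-r)x_1 + x_2\big) = (r-5)x_1 - x_2,$$
which is the claimed identity.

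There is essentially no obstacle here beyond bookkeeping: the only points requiring a word of care are (i) verifying that the translation $\gamma \leftrightarrow r-4$ sends the exclusion $\gamma\neq 2$ to the stated exclusion $r\neq 6$, ensuring the denominator $6-r$ never vanishes, and (ii) tracking the sign when clearing the denominator, since $r-6$ and $6-r$ differ by a factor of $-1$. Both the derivation of $F_x$ and the evaluation at $t=1$ are quoted verbatim from the preceding discussion, so no new analytic input is needed.
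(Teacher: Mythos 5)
Your proof is correct and follows exactly the paper's route: substituting $\gamma = r-4$ into the Euler summation formula~\eqref{def:euler_sum} and clearing the denominator (the paper simply writes the fraction with both numerator and denominator negated, which is the same sign bookkeeping you carry out explicitly). Nothing is missing.
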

\begin{proof}
	Substituting $\gamma=r-4$ into (\ref{def:euler_sum}), we obtain
\begin{align*}
	(r-6) \sum_{n=1}^{\infty} x_n &= (r-6) \frac{ (\gamma-1)x_1 - x_2}{\gamma -2} \\
	&= (r-5) x_1 - x_2.
\end{align*} 
	
\end{proof}

\subsection{Graph}

By a \emph{graph} we mean an undirected simple graph, i.e., a graph whose 
edges are unoriented and which contains neither loops nor multiple edges. 
A graph may be either finite or infinite. 

The \emph{degree} of a vertex $w$ in a graph $G$ is 
the number of edges of $G$ incident to $w$, denoted by $\deg_G(w)$.
A \emph{path} 
between vertices $w_1$ and $w_t$ in a graph $G$ is a sequence of vertices
$(w_1, w_2, \ldots, w_t)$ such that consecutive vertices are adjacent in $G$.
A graph is called connected if there exists a path between any two of its vertices.
A \emph{cycle} is a path $(w_1, w_2, \ldots, w_t)$ of length $t \geq 3$ in which the first and last vertices $w_1$ and $w_t$ are adjacent. 
A simple path or simple cycle is a path or cycle in which all vertices are distinct.
Formally, we identify a path or a cycle with the subgraph formed by its vertices and edges.

A graph is \emph{planar} if it can be drawn in the plane so that its edges do not cross. 
A \emph{plane graph} is a planar graph together with a fixed embedding in the Euclidean plane.
Formally, a plane graph $G$ is a triple $G = (V, E, \varphi)$, where $V$ is a set of vertices, 
$E$ is a set of edges, and $\varphi : G \rightarrow \mathbb{R}^2$ is an embedding in the Euclidean plane. For convenience, we simply write $V_G=V$ and $E_G=E$.
In this paper, all graphs are considered up to combinatorial equivalence of their embeddings in the plane.
Therefore, we will refer to plane graphs simply as graphs and write $G=(V,E)$, 
assuming that their planar embedding is fixed.

A \emph{face} of a graph drawn in the plane is a connected region of the plane enclosed 
by its edges, whose interior contains no vertices or edges. For finite graphs, one of these regions 
extends to infinity (more precisely, it is the complement in the plane of the region 
occupied by the drawing of the graph), and this region is called the \emph{outer face}.

\subsection{Euler’s formula for finite planar graph}
Let $G$ be a finite connected planar graph drawn in the plane, and let $v, e, f$ denote respectively 
the numbers of its vertices, edges, and interior faces. A fundamental property of such graphs is expressed 
by the Euler formula, which states that
$$v-e+f=1,$$
where here, $f$ counts only the finite faces (excluding the outer face).
In this paper, we will define analogues of the numbers of vertices, edges, and faces for a mesh. 
As we will show, the quantities we will define will also satisfy Euler's formula.

\subsection{Mesh}

A \emph{mesh} is a connected infinite planar graph drawn in the plane without edge crossings, 
that satisfies the following conditions.
\begin{itemize}
	\item Each vertex has the same finite degree $r$. This number $r$ is called 
	the \emph{degree of the mesh}.
	\item Each face is a triangle, i.e. a region bounded by three distinct edges.
	\item Every bounded region of the plane contains only finitely many vertices and edges.
\end{itemize}
\begin{remark}
Such a graph can be drawn in the hyperbolic plane so that 
each of its faces is an equilateral triangle. The graph drawn in the hyperbolic plane can be projected 
onto the Euclidean plane, since it is connected, planar, and locally finite.
\end{remark}

\subsection{Combinatorial disk and convexity}

Let $M$ be the mesh of degree $r>6$ and let $C \subset M$ be a simple cycle of the mesh $M$. 
A graph $G$ consisting of the cycle $C$ and all vertices and edges inside 
the cycle $C$ will be called a \emph{combinatorial disk}. The cycle $C$ is called the boundary of $G$. 
In other words, a combinatorial disk is a graph occupying a connected region of the plane. 
A combinatorial disk is a combinatorial analogue of a polygonal disk in a plane.  

A combinatorial disk $G$ with boundary $C$ is called \emph{convex} if and only if 
every vertex of the cycle $C$ has a degree in the graph $G$ 
that does not exceed $2+\frac{r-2}{2}$, i.e.
$$\underset{w \in V_C}{\forall} \deg_G(w) \leq 1+\frac{r}{2}.$$
In other words, $G$ is convex if at each vertex, among $r-2$ edges of the mesh that do not belong to the cycle $C$, at most half belong to the graph.
The convexity of a combinatorial disk is a combinatorial analogue of the convexity of a polygon in a plane.  

\begin{lemma}
	\label{lem:initial}
Let $G \subset M$ be a convex combinatorial disk with the boundary $C~=~(w_1, \dots, w_t)$, and let $v, e, f$ denote respectively the number of its vertices, edges, and (interior) faces. We define the sum of degrees of the boundary $d~=~\sum\limits_{j=1}^{t} \deg_{G}(w_j)$.
Then 
\begin{equation*}
	\left\{
	\begin{aligned}
		(r-6) v &= tr - 2t -d -6 \\
		(r-6) e &= 2tr -3d -3r \\
		(r-6) f &= tr+2t-2d-2r
	\end{aligned}
	\right.
\end{equation*}
\end{lemma}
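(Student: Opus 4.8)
The plan is to derive the three identities from a linear system relating $v,e,f,t,d,r$, assembled from the handshake identity, a triangle–edge incidence count, and Euler's formula. Note first that $G$ is finite (by the local finiteness condition in the definition of a mesh), connected, and plane, so Euler's formula $v-e+f=1$ applies. \emph{Step 1 (degrees).} I would first show that every vertex $w\in V_G$ not lying on $C$ has $\deg_G(w)=r$. Such a $w$ lies in the open bounded region $D$ enclosed by the Jordan curve $C$; since the $r$ edges of $M$ at $w$ are arcs issuing from $w$ that, being edges of $M$, cannot cross the mesh edges constituting $C$, each of the $r$ neighbours of $w$ lies in $\overline D$ and each incident edge lies in $G$. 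As $C$ is a simple cycle it has exactly $t$ distinct vertices, so $G$ has $v-t$ interior vertices, and the handshake lemma gives $2e = r(v-t) + d$.

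\emph{Step 2 (triangles).} I would then count incidences between the interior faces of $G$ and its edges. Each interior face of $G$ is a triangular face of $M$ lying in $D$, hence bounded by three edges; each of the $t$ edges of $C$ bounds exactly one interior face (the triangle of $M$ on its inner side, the outer face lying on the other side), while each of the remaining $e-t$ edges lies inside $D$ and bounds two interior faces. Double counting the incidences gives $3f = 2(e-t) + t = 2e - t$.

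\emph{Step 3 (solving the system).} Eliminating $f$ between $v-e+f=1$ and $3f = 2e-t$ yields $e = 3v - t - 3$. Substituting this into $2e = r(v-t)+d$ gives $6v - 2t - 6 = rv - rt + d$, that is $(r-6)v = tr - 2t - d - 6$, which is the first identity. The other two follow by back-substitution: $(r-6)e = 3(r-6)v - (r-6)(t+3)$ and $3(r-6)f = 2(r-6)e - (r-6)t$, and collecting terms produces $(r-6)e = 2tr - 3d - 3r$ and $(r-6)f = tr + 2t - 2d - 2r$.

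The algebra of Step 3 is routine bookkeeping; the points that genuinely need care are the topological claims. In Step 1 it is the assertion that an interior vertex keeps all $r$ of its mesh edges inside $G$, which rests on the crossing-free embedding together with the fact that $C$ is built from mesh edges. In Step 2 it is the companion fact that the $t$ edges of $C$ are precisely the edges of $G$ incident to the outer face, each lying on exactly one interior triangle, while no edge inside $D$ is incident to the outer face. I also note that the convexity hypothesis on $G$ is not used in this lemma; it is stated here only because it will be needed in the later expansion argument.
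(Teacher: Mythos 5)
Your proof is correct and follows essentially the same route as the paper: you assemble the same four relations (Euler's formula $v-e+f=1$, the count $s=v-t$ of interior vertices each of degree $r$ giving the handshake identity $2e=r(v-t)+d$, and the face--edge double count $3f+t=2e$) and then solve the resulting linear system, merely carrying out explicitly the elimination that the paper omits as ``easily verified.'' Your extra remarks --- the topological justification that interior vertices retain all $r$ mesh edges and that boundary edges meet exactly one interior triangle, and the observation that convexity is not needed here --- are sound refinements of detail rather than a different argument.
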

\begin{proof}
	We use four relations concerning the graph $G$. 
	First, Euler's formula gives $v - e + f = 1$.  
	Second, let $s=v-t$ denote the number of interior vertices of $G$. (Each of them has degree $r$.) 
	Third, since each edge has two endpoints, the sum of all vertex degrees $rs+d$ is equal to $2e$.  
	Finally, each face is triangular and the outer face has $t$ sides; as each edge is incident to two faces, we obtain $3f + t = 2e$.  
	Merging these relations, we obtain the following system of equations:
	\begin{equation*}
		\left\{
		\begin{aligned}
			v+f  & =  e+1,\\
			t+s  & =  v,\\
			rs+d & =  2e,\\
			3f+t & =  2e.
		\end{aligned}
		\right.
	\end{equation*}
	We treat the variables $t, d$, and $r$ as parameters 
	and solve the system of equations using the standard Gaussian elimination. Since $r-6 \neq0$, the system has unique solution given by:
	\begin{equation*}
		\left\{
		\begin{aligned}
			(r-6) v &= tr - 2t -d -6, \\
			(r-6) e &= 2tr -3d -3r, \\
			(r-6) f &= tr+2t-2d-2r, \\
			(r-6) s &= 4t-d-6.
		\end{aligned}
		\right.
	\end{equation*}
	We omit the Gaussian elimination, as the solution can be easily verified.
\end{proof}

\subsection{Graph expansion}
Let $M$ be a mesh of degree $r$. For any non-empty subgraph $G \subset M$, we define the expansion of $G$ as \( T(G)=H \subset M \), where 
$$
\begin{array}{@{}r@{ }l@{}}
V_{H} &= V_G \cup \left\{ w \in \big(V_M \setminus V_G\big) \middle| \underset{u \in V_G}{\exists} (u, w) \in E_M \right\} \\
E_H &= \big\{ \left(w_1, w_2\right) \in E_M | w_1, w_2 \in V_H \big\} \\
\end{array}
$$
In other words, $T(G)$ is an extension of $G$ obtained by adding to it all vertices and edges of faces that share at least one vertex with $G$. 

\begin{lemma}
	\label{lem:vfe}
	Let $G \subset M$ be a convex combinatorial disk with the boundary $C=(w_1, \dots, w_t)$ and $H=T(G)$ be its expansion. 
	Then $H$ is convex and newly added vertices form a simple cycle $D$ which is boundary of $H$, and each of them $u \in V_D$ has degree $\deg_H(u) \in \{3, 4\}$.    
	
	\begin{enumerate}[(a)]
		\item \label{lem:a}  
	Let $a$ and $b$ be the numbers of vertices of $H$ with degree $3$ or $4$, respectively.
	Then
	\begin{equation*}
		\left\{
		\begin{aligned}
			a &= t(r-2)-\sum\limits_{j=1}^{t} \deg_G(w_j) , \\
			b &= t
		\end{aligned}
		\right.
	\end{equation*}
	\item \label{lem:b}
	Let $v, e, f$ denote the numbers of newly added vertices, faces, and edges, i.e. 
	those that belong to $H$ but not to $G$. Then
	\begin{equation*}
		\left\{
		\begin{aligned}
			v &= a+b,\\
			e &= 2a+3b, \\
			f &= a+2b,
		\end{aligned}
		\right.
	\end{equation*}
	\end{enumerate}
\end{lemma}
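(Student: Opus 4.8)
The plan is to extract everything from the local picture of the mesh at a single boundary vertex. Write $d_j=\deg_G(w_j)$. Since $G$ is a combinatorial disk, the $d_j$ edges at $w_j$ lying in $G$ occupy a consecutive block of the rotation around $w_j$, with the two cycle edges $w_{j-1}w_j$ and $w_jw_{j+1}$ at the two ends; I call the remaining $r-d_j$ edges at $w_j$ \emph{outward}, and (using that interior vertices of $G$ have all $M$-neighbours in $V_G$) their endpoints lie outside $V_G$. Convexity gives $d_j\le 1+r/2$, hence $r-d_j\ge r/2-1\ge 3$ since $r\ge 7$. A first consequence: every edge of $M$ at $w_j$ lies in $H$ (inward ones are in $G\subseteq H$, outward ones reach a vertex of $V_H$), hence so does every face of $M$ at $w_j$; thus each $w_j$, and therefore every vertex of $G$, is an interior vertex of $H$. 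So the newly added vertices are exactly the vertices of $H$ still having an $M$-edge leaving $V_H$, i.e. the \emph{boundary vertices} of $H$; and since every new vertex $u$ will satisfy $\deg_H(u)\le 4<7\le r$, each new vertex genuinely is such a boundary vertex.

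The heart of the argument is to show that these boundary vertices form a single simple cycle $D=\partial H$. The region $R_H\subseteq\mathbb{R}^2$ occupied by $H$ is obtained from the closed disk $R_G$ occupied by $G$ by attaching, around each $w_j$, the $r-d_j-1$ triangles between consecutive outward edges, together with, across each edge $w_jw_{j+1}$, the unique triangle $w_jw_{j+1}x_j$ lying outside $C$. The main obstacle is proving that this collar does not wrap around on itself, equivalently that $R_H$ is again a closed topological disk (this also rules out exterior chords between boundary vertices, which would create new faces with no new vertex and wreck the count below). The clean route is the hyperbolic model of the Remark: there all faces are equilateral triangles with vertex angle $2\pi/r$, so the interior angle of $G$ at $w_j$ equals $(d_j-1)\cdot 2\pi/r\le\pi$ by convexity, $G$ is a geodesically convex polygon, and the combinatorial collar is carried by a metric neighbourhood of a convex set, which is an annulus; alternatively one attaches the collar triangles one boundary vertex at a time and checks, using $r-d_j\ge 3$, that no previously attached triangle is ever met again. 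Granting that $R_H$ is a disk, $\partial R_H$ is a single simple closed curve, and by the previous paragraph the vertices it meets are exactly the new ones and the edges it uses are exactly the $M$-edges with both endpoints new; this produces the simple cycle $D=\partial H$. Listing the outward edges at $w_j$ from the $w_{j-1}$-side as $w_jx_{j-1},\,w_jp^{(j)}_1,\dots,w_jp^{(j)}_{r-d_j-2},\,w_jx_j$, the endpoints $x_{j-1},x_j$ (``corners'', one per edge of $C$) are shared with the adjacent fans, while $p^{(j)}_1,\dots,p^{(j)}_{r-d_j-2}$ (``middle'' vertices) belong to $w_j$'s fan alone. A middle vertex is incident in $H$ only to $w_j$ and to its two $D$-neighbours, so it has degree $3$; a corner $x_j$ is incident in $H$ to $w_j$, $w_{j+1}$ and its two $D$-neighbours, which are four distinct vertices (each adjacent fan contains a middle vertex since $r-d_j\ge 3$), so it has degree $4$. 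Hence $\deg_H(u)\in\{3,4\}$ for every new vertex, and since $1+r/2\ge 4.5$ for $r\ge 7$, all of them meet the convexity bound, so $H$ is convex.

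For part (a): the degree-$4$ vertices are precisely the corners, one per edge of $C$, so $b=t$; the degree-$3$ vertices are precisely the middle vertices, and there are $\sum_{j=1}^t(r-d_j-2)=t(r-2)-\sum_{j=1}^t\deg_G(w_j)$ of them, so $a=t(r-2)-\sum_{j=1}^t\deg_G(w_j)$.

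For part (b) I count directly from the same picture, using the identities $\sum_j(r-d_j)=tr-\sum_jd_j=a+2b$ and $\sum_j(r-d_j-1)=tr-t-\sum_jd_j=a+b$, both immediate from (a). Every new vertex lies on $D$ and is a middle or a corner, so $v=a+b$. The new faces are the fan triangles $w_jp^{(j)}_ip^{(j)}_{i+1}$, numbering $\sum_j(r-d_j-1)=a+b$, together with the $t=b$ corner triangles $w_jw_{j+1}x_j$, giving $f=(a+b)+b=a+2b$. The new edges are the spokes from boundary vertices to new vertices, numbering $\sum_j(r-d_j)=a+2b$, together with the edges of the simple cycle $D$, which number $|V_D|=v=a+b$, so $e=(a+2b)+(a+b)=2a+3b$. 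As a consistency check $v-e+f=0$, exactly as it must be for an annular collar between $C$ and $D$.
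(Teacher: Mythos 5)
Your proposal is correct and runs on the same engine as the paper's proof: the cyclic order of the $r$ mesh edges around each boundary vertex $w_j$, which splits the new neighbours of $w_j$ into $r-\deg_G(w_j)-2$ ``middle'' vertices of degree $3$ and two ``corner'' vertices of degree $4$ shared with the adjacent fans; part (a) is then read off identically in both arguments. The genuine differences are two. First, the paper settles the global claim --- that the new vertices form a simple cycle bounding $H$, with no stray $M$-edges (exterior chords of $C$, chords of $D$, fans wrapping onto each other) --- with the single phrase ``since $G$ is convex'', while you isolate it as the main obstacle and sketch a real mechanism: in the hyperbolic model, combinatorial convexity is exactly the statement that the interior angles $(\deg_G(w_j)-1)\cdot 2\pi/r$ are at most $\pi$, so $R_G$ is geodesically convex and the collar is an annulus. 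Do note that your own treatment still has a soft joint: the assertion that every $M$-edge with both endpoints new lies on $\partial R_H$ does not follow from ``the previous paragraph''; it follows from the same convexity mechanism applied to $H$ (its boundary angles are $4\pi/r$ or $6\pi/r<\pi$, so a geodesic edge joining two vertices of $V_H$ lies in the convex region $R_H$ and is therefore one of the edges you enumerated). Even so, your account is more explicit than the paper's on exactly the point where the paper is weakest. Second, in part (b) you count directly (spokes plus the $v$ edges of $D$ for $e$; fan triangles plus corner triangles for $f$), whereas the paper subtracts $v$ from incidence sums ($e=3a+4b-v$, and a ``degree minus one'' face count); the two are equivalent, but your version is more transparent and incidentally sidesteps the arithmetic slip in the paper's intermediate expression for $f$ (which should read $(3-1)a+(4-1)b-v$), and your check $v-e+f=0$ for the annular collar is a nice sanity test the paper omits.
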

\begin{proof}
	Newly added vertices $V_{H} \setminus V_{G}$ 
	form a simple cycle $D$, since $G$ is convex.
			
	Each vertex of the mesh $M$ has degree $r$, then each vertex $w \in V_{C}$ 
	is adjacent with $deg_{G}(w)$ vertices of $G$  (including predecessor and successor in the cycle $C$), 
	and thus with $r-deg_{G}(w)$ vertices of $H$. 
	
	Let $w_1, w_2$ be any two consecutive vertices of cycle $C$. 
	Recall that each face is a triangle, and each edge belongs to two faces. 
	Therefore, there exist exactly two vertices $u$ such that
	$w_1, w_2, u$ are the vertices of a single face. 
	One of them is located outside the cycle $C$, and the other inside the cycle $C$. 
	Considering the cyclic order of edges incident to a vertex $w \in C$, one observes the following pattern:
	\begin{itemize}
		\item the edge adjacent to the vertex $u \in V_{D}$ with degree $4$ in $H$ which is common with the predecessor on $C$,
		\item $deg_{G}(w)$ edges adjacent with vertices in $G$
		\item the edge adjacent to the vertex $u' \in V_{H}$ with degree $4$ in $H$ which is common with the successor on $C$,
		\item $r-2-deg_G(w)$ edges adjacent to the vertices with degree $3$ in $H$.
	\end{itemize}
	Therefore any vertex $u \in H$ has degree $\deg_{H}(u) \in \{3,4\}$. Since
	$$1+\frac{r}{2} > 2+\frac{6}{2} = 4,$$
	then the convexity condition on vertex degrees is satisfied and the graph $H$ is convex. 
	Furthermore, the number of vertices with degree $4$ is equal to the length of the cycle $C$, i.e.
	$$b =  t$$
	and the numbers of vertices with degree $3$ is equal to the number of other edges
	$$ a = \sum\limits_{ w \in V_{C}} r-2-deg_G(w) = (r-2)t - \sum\limits_{j=1}^{t} \deg_G(w_j).$$

	Furthermore, using simple observations, we will count the number of newly added vertices, edges, and faces. 
	First, each newly added vertex $u \in D$ has degree $\deg_H(u) \in \{3, 4 \}$ 
	including one edge adjacent to the predecessor on $C$ and one edge adjacent to the successor on $C$. 
	Second, each vertex $u \in D$ is incident to $\deg_H(u)-1$ faces, including one common face with predecessor on $D$ and one common face with successor on $D$.
	Thus:
	\begin{equation*}
		\left\{
		\begin{aligned}
			v &= a+b,\\
			e &= 3a+4b - v = 2a+3b, \\ 
			f &= (3-1)a+(4-3)b - v = a+2b.
		\end{aligned}
		\right.
	\end{equation*}
\end{proof}

\section{Counting vertices, edges, and faces in iterated graph expansions}

In this section we provide recursive formulas for the number of vertices, edges, and faces 
in graphs obtained by iterative repeated graph expansion.

Let $M$ be the mesh of degree $r > 6$ and let $G_0 \subset M$ be a convex combinatorial disk.
Define the sequence of graphs $(G_n)_{n=0}^{\infty}$ by the recurrence
$G_{n+1}=T(G_n)$ for all $n \in \mathbb{N}$. 
In other words $G_n = T^n(G_0)$ is a graph obtained by applying expansion $n$ times starting from the initial graph $G_0$.
A sample visualization of the first few expansions of the single-vertex graph is shown in \cref{fig:vertex_expansion}.
\mesh

We denote the boundary of $G_0$ by $C=(w_1, \dots, w_t)$ and the sum of degrees of boundary by $d=\sum\limits_{j=1}^{t} \deg_{G_0} (w_j)$. 
Let $a_n$ and $b_n$ for $n \ge 1$, denote the number of vertices of $G_n$ with degree $3$ and $4$ in $G_n$, respectively. Let $v_n, e_n$, and $f_n$ denote the number of newly added vertices, edges, and faces in $n$-th expansion $G_n \rightarrow G_{n+1}$.

We now apply \cref{lem:vfe} to derive relationships between the quantities just introduced.
First, we note that simple induction provides that $G_{n-1}$ is convex for any $n \in \mathbb{N}_+$ and thus the boundary of $G_{n}$ consists of only vertices with degree $3$ or $4$ in $G_{n}$, and remaining vertices of $G_{n}$ have degree equal to $r$ in $G_{n}$. 
Convexity is preserved under expansion, since $1+\frac{r}{2} > 1+\frac{6}{2} = 4$.
 
Applying part (\ref{lem:a}) of \cref{lem:vfe} for $G=G_n$ we obtain some result at the initial step $n=0$ and different results for the remaining steps.
\begin{itemize}
	\item For $n=0$ we obtain initial values: 
	\begin{equation}
		\label{eq:initial}
	\left\{
	\begin{aligned}
		a_1 &= t(r-2)-d \\
		b_1 &= t
	\end{aligned}
	\right.
	\end{equation}
	\item For $n \geq 1$ we obtain a coupled recurrence relation:
	\begin{equation}
		\label{eq:reccurssion}
	\left\{
	\begin{aligned}
		a_{n+1} &= (a_n+b_n)(r-2)-(3a_n+4b_n) \\&= (r-5)a_n + (r-6)b_n, \\
		b_{n+1} &= a_n+b_n,
	\end{aligned}
	\right.
	\end{equation}
	since the boundary of $G_n$ has $a_n+b_n$ vertices. 
\end{itemize} 
 
Part \cref{lem:b} provides relationship with remaining quantities:
\begin{equation}
	\label{eq:linear}
	\left\{
	\begin{aligned}
		v_n &= a_n+b_n,\\
		e_n &= 2a_n+3b_n, \\
		f_n &= a_n+2b_n.
	\end{aligned}
	\right.
\end{equation}

For the remaining quantities, we obtain the following simple recurrence.
\begin{proposition}
	\label{prop:recall}
Let $(x_n)_{n=0}^{\infty}$ be a fixed among three sequences $(v_n)_{n=1}^{\infty}, (e_n)_{n=1}^{\infty},$ and $(f_n)_{n=1}^{\infty}$.
It satisfies the following recurrence
\begin{equation}
	\label{rec:main}
	x_{n+2} = (r-4) x_{n+1} - x_{n}
\end{equation}
for all $n \in \mathbb{N}_+$.
\end{proposition}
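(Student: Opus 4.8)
The plan is to reduce the statement to the coupled linear recurrence \eqref{eq:reccurssion} for the boundary counts $(a_n)$ and $(b_n)$, together with the observation that \eqref{eq:linear} expresses each of $v_n$, $e_n$, $f_n$ as a fixed integer linear combination of $a_n$ and $b_n$.

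First I would decouple \eqref{eq:reccurssion}. Written in matrix form, its transition matrix is $A = \begin{pmatrix} r-5 & r-6 \\ 1 & 1 \end{pmatrix}$, with trace $r-4$ and determinant $(r-5)-(r-6)=1$; hence its characteristic polynomial is $\lambda^{2}-(r-4)\lambda+1$, and the Cayley--Hamilton theorem gives $A^{2}=(r-4)A-I$. Applying this identity to the vector $(a_n,b_n)^{\top}$ shows that $(a_n)$ and $(b_n)$ each satisfy $x_{n+2}=(r-4)x_{n+1}-x_n$ for every $n\ge 1$. Concretely, without invoking Cayley--Hamilton: from $b_{n+1}=a_n+b_n$ one gets $(r-6)b_n = a_{n+1}-(r-5)a_n$, and substituting into $a_{n+2}=(r-5)a_{n+1}+(r-6)b_{n+1}$ yields $a_{n+2}=(r-4)a_{n+1}-a_n$; the argument for $(b_n)$ is symmetric.

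Next I would invoke linearity. The sequences obeying the fixed homogeneous recurrence \eqref{rec:main} form a vector space, so any linear combination with constant coefficients of $(a_n)$ and $(b_n)$ obeys it as well. By \eqref{eq:linear} each of $(v_n)$, $(e_n)$, $(f_n)$ is exactly such a combination, namely $v_n=a_n+b_n$, $e_n=2a_n+3b_n$, $f_n=a_n+2b_n$, so all three satisfy \eqref{rec:main} for all $n\in\mathbb{N}_+$. For $(v_n)$ alone there is an even shorter route: $v_n=a_n+b_n=b_{n+1}$ by the second line of \eqref{eq:reccurssion}, so its recurrence follows immediately from that of $(b_n)$.

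The argument is essentially bookkeeping, so I do not expect a genuine obstacle. The only points deserving a moment's care are checking that $\det A = 1$ exactly --- this is precisely where the specific coefficients $r-5$ and $r-6$ enter, and it is what makes the characteristic polynomial coincide with \eqref{rec:main} --- and verifying the range of indices: \eqref{eq:reccurssion} is asserted only for $n\ge 1$, which is exactly enough to deduce \eqref{rec:main} for all $n\ge 1$, since that recurrence relates $x_n$, $x_{n+1}$, and $x_{n+2}$.
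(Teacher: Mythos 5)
Your proposal is correct and follows essentially the same route as the paper: first decouple \eqref{eq:reccurssion} to show that $(a_n)$ and $(b_n)$ each satisfy $x_{n+2}=(r-4)x_{n+1}-x_n$, then transfer this to $(v_n)$, $(e_n)$, $(f_n)$ by linearity through \eqref{eq:linear}. The only difference is cosmetic (Cayley--Hamilton or direct substitution versus the paper's difference manipulations), and your index bookkeeping for $n\ge 1$ matches the paper's.
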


\begin{proof}
	First, we will prove that the recurrence is satisfied by the sequences $(a_n)_{n=1}^{\infty}$ and $(b_n)_{n=1}^{\infty}$. 
	Using \cref{eq:reccurssion} we obtain that for any $n \in \mathbb{N}_+$ holds:
	\begin{align*}
		\qquad\qquad a_{n+2} - a_{n+1} 
		&= (r-5) (a_{n+1} - a_n) + (r-6) (b_{n+1}-b_n) \\
		&= (r-5) (a_{n+1} - a_n) + (r-6) a_n \\
		&= (r-5)a_{n+1} - a_n \\
	\end{align*}
	and 
	\begin{align*}
		b_{n+2} - (r-5) b_{n+1}
		&= (a_{n+1}+b_{n+1})- (r-5)(a_n + b_n) \\
		&= a_{n+1}-(r-5) a_n + b_{n+1}-(r-5) b_n \\
		&= (r-6) b_n + b_{n+1}-(r-5) b_n \\
		&= b_{n+1}-b_n.
	\end{align*}
	Thus, the sequences $(a_n)_{n=1}^{\infty}$ and $(b_n)_{n=1}^{\infty}$ satisfy the recurrence \cref{rec:main}, i.e.:
	\begin{equation}\label{eq:pair}
		\left\{
		\begin{aligned}
			a_{n+2} = (r-4) a_{n+1} - a_{n}, \\
			b_{n+2} = (r-4) b_{n+1} - b_{n}
		\end{aligned}
		\right.
	\end{equation}
	for any $n \in \mathbb{N}_+$.

	Let $(x_n)_{n=0}^{\infty}$ be a fixed sequence among $(v_n)_{n=1}^{\infty}$, $(e_n)_{n=1}^{\infty}$, or $(f_n)_{n=1}^{\infty}$. 
	Observe that by \eqref{eq:linear}, the element $x_n$ can be written as
	$x_n=\alpha a_n+\beta b_n$ for fixed constants $\alpha, \beta$.
	Thereby 
	\begin{align*}
		&x_{n+2}-(r-4)x_{n+1}+x_n  \\ 
		&= (\alpha a_{n+2} + \beta b_{n+2}) - (r-4)(\alpha a_{n+1} + \beta b_{n+1}) + (\alpha a_n + \beta b_n) \\
		&= \alpha (a_{n+2} - (r-4) a_{n+1} + a_n)+ \beta (b_{n+2} - (r-4) b_{n+1} + b_n) \\
		&= 0
	\end{align*} 
	by \cref{eq:pair}. 
\end{proof}

Recall that $M$ is the mesh of degree $r > 6$, initial graph $G_0 \subset M$ is a convex combinatorial disk, and the sequence $(G_n)_{n=0}^{\infty}$ is given by the recurrence
$G_{n+1}=T(G_n)$ for all $n \in \mathbb{N}$. For the mesh $M$, we define the number of its vertices $v_M = \sum\limits_{n=0}^{\infty} v_j$, its edges $e_M = \sum\limits_{n=0}^{\infty} e_j$, and its faces $f_M = \sum\limits_{n=0}^{\infty} f_j$ by the Euler sum. 
We will show that this definition is independent of the choice of the initial disk
Since the sequences $(v_n)_{n=0}^{\infty}$, $(e_n)_{n=0}^{\infty}$, $(f_n)_{n=0}^{\infty}$ satisfy the recurrence \cref{rec:main}, their Euler sums are well-defined.
\begin{theorem}
	\label{thm:main}
The number of vertices, edges, and faces of the mesh $M$ depends solely on de degree of the mesh and are independent of the choice of the initial convex combinatorial disk $G_0$. 
More precisely:
\begin{equation*}
	\left\{
	\begin{aligned}
		v_M &= \frac{-6}{r-6}, \\
		e_M &= \frac{-3r}{r-6}, \\
		f_M &= \frac{-2r}{r-6}.
	\end{aligned}
	\right.
\end{equation*}

\begin{remark}
	These quantities should be viewed as Euler-summed invariants; they are not 
	cardinalities of infinite sets and can take non-integer or even negative values.
\end{remark}
\end{theorem}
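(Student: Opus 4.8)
The plan is to compute each of $v_M$, $e_M$, $f_M$ as a base contribution coming from the initial disk $G_0$ plus an Euler‑summed tail governed by the recurrence \cref{rec:main}. Write $v_0$, $e_0$, $f_0$ for the numbers of vertices, edges, and interior faces of $G_0$ itself; since the partial sum $\sum_{n=0}^{N} v_n$ equals $|V_{G_N}|$ and similarly for edges and faces, it is natural to read $v_M = v_0 + \sum_{n=1}^{\infty} v_n$, where the tail is the Euler sum of a sequence that obeys \cref{rec:main} for $n\ge 1$ by \cref{prop:recall}. Applying \cref{lem:initial} to $G_0$, whose boundary $C$ has length $t$ and boundary degree sum $d$, supplies the base terms $(r-6)v_0 = tr-2t-d-6$, $(r-6)e_0 = 2tr-3d-3r$, $(r-6)f_0 = tr+2t-2d-2r$.

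Next, I would evaluate the tails. By \cref{lem:sum}, $(r-6)\sum_{n=1}^{\infty} x_n = (r-5)x_1 - x_2$ for each $x\in\{v,e,f\}$. To make the right‑hand side explicit, use \cref{eq:linear} to write $x_n = \alpha a_n + \beta b_n$ with $(\alpha,\beta)$ equal to $(1,1)$, $(2,3)$, $(1,2)$ for $v,e,f$ respectively, and substitute the coupled recurrence \cref{eq:reccurssion}, namely $a_2 = (r-5)a_1 + (r-6)b_1$ and $b_2 = a_1+b_1$. A one‑line computation then gives $(r-5)v_1 - v_2 = -a_1$, $(r-5)e_1 - e_2 = -3a_1 + (r-6)b_1$, and $(r-5)f_1 - f_2 = -2a_1 + (r-6)b_1$. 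Using the initial values \cref{eq:initial}, that is $a_1 = t(r-2)-d$ and $b_1 = t$, the base terms from \cref{lem:initial} can likewise be rewritten as $(r-6)v_0 = a_1 - 6$, $(r-6)e_0 = 3a_1 - (r-6)b_1 - 3r$, and $(r-6)f_0 = 2a_1 - (r-6)b_1 - 2r$.

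Adding base term and tail for each quantity, every occurrence of $a_1$ and $b_1$, hence of $t$ and $d$, cancels, and one is left with $(r-6)v_M = -6$, $(r-6)e_M = -3r$, $(r-6)f_M = -2r$, which are the asserted identities; independence of the choice of $G_0$ is then immediate, since $t$ and $d$ no longer appear. As a consistency check I would note that $v_M - e_M + f_M = (-6+3r-2r)/(r-6) = 1$, so the three formal quantities satisfy Euler's formula; one can also see this conceptually by applying ordinary Euler summation to the identity $|V_{G_N}| - |E_{G_N}| + |F_{G_N}| = 1$, which holds for every finite $G_N$.

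The computations here are entirely routine; the real content is that the $(t,d)$‑dependence of the base term exactly annihilates that of the Euler tail, which is precisely the independence assertion. The one place to be careful is that \cref{rec:main} is available only for $n\ge 1$, so the $n=0$ term must be handled separately through \cref{lem:initial} rather than absorbed into \cref{lem:sum}; keeping the index conventions straight and fixing the meaning of $v_0,e_0,f_0$ is where the argument most easily goes astray.
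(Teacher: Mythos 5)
Your proposal is correct and follows essentially the same route as the paper: split $v_M,e_M,f_M$ into the $G_0$ contribution computed via \cref{lem:initial} plus the Euler-summed tail evaluated via \cref{lem:sum}, then simplify with \cref{eq:linear}, \cref{eq:reccurssion}, and \cref{eq:initial} so that the $t$- and $d$-dependence cancels. Your explicit handling of the index convention (treating $v_0,e_0,f_0$ as the counts of $G_0$ and starting the Euler sum at $n=1$) matches what the paper's proof does implicitly.
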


\begin{proof}
Since the sequences $(v_n)_{n=1}^{\infty}, (e_n)_{n=1}^{\infty},$ and $(f_n)_{n=1}^{\infty}$ satisfy the recurrence relation \cref{rec:main} by \cref{prop:recall}, 
then we applying \cref{lem:sum} and from \label{eq:vef} we obtain
\begin{equation*}
	\left\{
	\begin{aligned}
		(r-6) (v_M - v_0) 
		&=  (r-5)v_1 - v_2, \\
		&=  (r-5)(a_1+b_1) - a_2-b_2 \\
		(r-6) (e_M  - e_0)
		&= (r-5)e_1 - e_2, \\
		&=  (r-5)(2a_1+3b_1) - 2a_2-3b_2\\
		(r-6) (f_M - f_0)
		&= (r-5)f_1 - f_2 \\
		&=  (r-5)(a_1+2b_1) - a_2-2b_2
	\end{aligned}
	\right.
\end{equation*}
Next, using the recurrence \eqref{eq:reccurssion} together with the initial values \eqref{eq:initial}, we obtain
\begin{equation*}
	\left\{
	\begin{aligned}
		(r-6) (v_M - v_0)
		&= (r-5)(a_1+b_1) - \big((r-5)a_1+(r-6)b_1\big)-(a_1+b_1) \\
		&= -a_1 \\
		&= -(tr-2t-d) \\
		(r-6) (e_M - e_0),
		&= (r-5)(2a_1+3b_1) -2\big((r-5)a_1+(r-6)b_1\big)-3(a_1+b_1)\\
		&= -3a_1+(r-6)b_1 \\
		&= -3(tr-2t-d)+(r-6)t \\
		(r-6) (f_M - f_0),
		&=  (r-5)(a_1+2b_1) -\big((r-5)a_1+(r-6)b_1\big)-2(a_1+b_1) \\
		&= -2a_1+(r-6)b_1 \\
		&= -2(tr-2t-d)+(r-6)t.
	\end{aligned}
	\right.
\end{equation*}
Finally, using \cref{lem:initial} for $G=G_0$ we obtain formulas related only with degree of the mesh:
\begin{equation*}
	\left\{
	\begin{aligned}
		(r-6) v_M 
		&= (r-6) v_0 -(tr-2t-d) \\
		&= tr - 2t -d -6 -tr+2t+d \\
		&= -6, \\ 
		(r-6) e_M 
		&= (r-6)e_0 -3(tr-2t-d)+(r-6)t \\
		&= 2tr -3d -3r -3tr+6t+3d+rt-6t  \\
		&= -3r, \\
		(r-6) f_M 
		&=  (r-6) f_0 -2(tr-2t-d)+(r-6)t \\
		&=  tr+2t-2d -2tr+4t+2d+rt-6t \\
		&= -2r.
	\end{aligned}
	\right.
\end{equation*}
By dividing both sides of each equation by $r-6$, we obtain the thesis. 
\end{proof}

\begin{corollary}
	The numbers $v_M$, $e_M$, $f_M$ satisfy Euler's formula:
	$$v_M - e_M + f_M = 1.$$
\end{corollary}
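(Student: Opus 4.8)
The plan is to substitute the three closed forms from \cref{thm:main} directly into the left-hand side and simplify over the common denominator $r-6$. Since all three quantities $v_M$, $e_M$, $f_M$ are expressed as fractions with the same denominator $r-6 \neq 0$, the combination $v_M - e_M + f_M$ collapses to $\frac{-6 - (-3r) + (-2r)}{r-6}$, and the numerator is $-6 + 3r - 2r = r - 6$, which cancels against the denominator to leave $1$. This is a one-line verification once \cref{thm:main} is in hand.

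An alternative route, which I would mention but not pursue, is to work at the level of the partial sums: for each $N$, the graph $G_N$ is a finite connected planar graph, so the classical Euler formula $v^{(N)} - e^{(N)} + f^{(N)} = 1$ holds for its vertex, edge, and interior-face counts, where $v^{(N)} = v_0' + \sum_{n=1}^{N} v_n$ and similarly for $e$ and $f$ (with $v_0'$ etc.\ the counts of $G_0$ itself). Passing to Euler sums is linear, so one might hope to take $N \to \infty$ in the formal sense; however, making this rigorous requires checking that the Euler summation operator respects the finite linear identity $v - e + f = 1$ across the three sequences, which ultimately reduces to the same bookkeeping already done in the proof of \cref{thm:main}. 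The direct substitution is therefore the cleaner argument.

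There is essentially no obstacle here: the corollary is a purely algebraic consequence of the explicit formulas, and the only thing to be careful about is that the cancellation of $r-6$ is legitimate, which is guaranteed by the standing hypothesis $r > 6$ (so $r - 6 \neq 0$) that underlies the whole section. I would simply present the three-term computation and conclude.

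\begin{proof}
By \cref{thm:main}, $v_M = \frac{-6}{r-6}$, $e_M = \frac{-3r}{r-6}$, and $f_M = \frac{-2r}{r-6}$, where $r - 6 \neq 0$ since $r > 6$. Hence
$$v_M - e_M + f_M = \frac{-6 + 3r - 2r}{r-6} = \frac{r-6}{r-6} = 1,$$
as claimed.
\end{proof}
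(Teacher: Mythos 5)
Your proof is correct and is essentially identical to the paper's: both substitute the closed forms for $v_M$, $e_M$, $f_M$ from \cref{thm:main} and cancel the common factor $r-6$, which is nonzero since $r>6$. The alternative partial-sum route you sketch is not needed and is not pursued in the paper either.
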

\begin{proof}
	From \cref{thm:main} we obtain
	$$v_M - e_M + f_M = -\frac{6}{r-6} + \frac{3r}{r-6} - \frac{2r}{r-6} = \frac{r-6}{r-6}  = 1.$$
\end{proof}

\bibliographystyle{plain}
\bibliography{reference}

@inbook{Tap21,
author = {Tapp, Kristopher},
year = {2021},
month = {08},
pages = {171-189},
title = {The Five Platonic Solids},
isbn = {978-3-030-51668-0},
doi = {10.1007/978-3-030-51669-7_8}
}

@article{Cor04,
author = {Comes, Jonathan},
year = {2004},
month = {10},
pages = {30-37},
title = {Regular Polytopes},
volume = {1},
journal = {The Mathematics Enthusiast},
doi = {10.54870/1551-3440.1007}
}

@article{Mon25,
author = {Montambaux, Gilles},
year = {2025},
month = {07},
pages = {515-531},
title = {Ramanujan, {Landau} and {Casimir}, divergent series: a physicist approach},
volume = {26},
journal = {Comptes Rendus. Physique},
doi = {10.5802/crphys.256}
}

@article{Ric08,
author = {Richeson, David},
year = {2008},
month = {01},
pages = {},
title = {Euler’s gem. The polyhedron formula and the birth of topology}
}

@article{Joh66,
author = {Johnson, Norman},
year = {1966},
month = {01},
pages = {},
title = {Convex Polyhedra with Regular Faces},
volume = {18},
journal = {Canadian Journal of Mathematics},
doi = {10.4153/CJM-1966-021-8}
}

@unknown{Att25,
author = {Attard, Mario},
year = {2025},
month = {10},
pages = {},
title = {Divergent Infinite Series -- {Ramanujan's} Initial Intuition},
doi = {10.48550/arXiv.2510.08652}
}

\end{document}